\def\ge{\geqslant}
\newtheorem{thm}{Theorem}[section]
\newtheorem{lem}[thm]{Lemma}
\newtheorem{re}{Remark}[section]
\title{ Lower bound of the energy of a complex unit gain graph in terms of the matching number of its underlying graph }
\author{
	Yuxuan Li \footnote{ E-mail address: liyx@mail.bnu.edu.cn(Y. Li)} \\
	{\footnotesize    \em  Sch. Math. Sci. {\rm \&} Lab. Math. Com. Sys.,
		Beijing Normal University, Beijing, 100875,  China}
}
\date{}
\begin{document}
\maketitle
\begin{abstract}

We establish a lower bound for the energy of a complex unit gain graph in terms of the matching number of its underlying graph, and characterize all the complex unit gain graphs whose energy reaches this bound. 

\medskip
\noindent {\em AMS classification:} 05C50; 05C35; 05C75.

\noindent {\em Key words:} complex unit gain graph; energy; matching number.
\end{abstract}
\section{Introduction}\label{s1}

All graphs considered in this paper are finite, undirected and simple. Let $G$ be a graph with vertex set $V(G)$ and edge set $E(G)$. Let $\vec{E}(G)$ be the set $\{e_{uv},e_{vu}~|~uv\in E(G)\}$, where $e_{uv}$ denotes the ordered pair $(u,v)$. We use $\mathbb{T}$ to denote the the multiplicative group consisting of all the complex units, i.e. $\mathbb{T}=\{z\in \mathbb{C}~|~|z|=1\}$. Suppose $\phi:\vec{E}(G)\to \mathbb{T}$ is an arbitrary mapping with the property $\phi(e_{uv})=\phi(e_{vu})^{-1}$. Then $\Phi=(G,\mathbb{T},\phi)$, with $G$ as its underlying graph, is called a complex unit gain graph (or $\mathbb{T}$-gain graph), which was introduced by Reff in \cite{R}. We refer to the elements in $\mathbb{T}$ as gains and $\phi$ as the gain function of $\Phi$. Note that a graph $G$ is just a complex unit gain graph with all the ordered pairs in $\vec{E}(G)$ having $1$ as their gains, and we denote it by $(G,\mathbb{T},1)$. If there is a cycle $C$ of $G$ consisting of edges $v_{1}v_{2},v_{2}v_3,\ldots, v_kv_1$, it can be given two directions: $C_1=v_{1}v_{2}\cdots v_{k}v_{1}$ and $C_2=v_{k}v_{k-1}\cdots v_{1}v_{k}$. The gain of $C_1$ in $\Phi$ is  defined to be $\phi(C_1) = \phi(e_{v_{1}v_{2}})\phi(e_{v_{2}v_{3}}) \cdots \phi(e_{v_{k-1}v_{k}})\phi(e_{v_{k}v_{1}})$. Similarly, we can define $\phi(C_2)$ and have $\phi(C_2)=\phi(C_1)^{-1}$. If $\phi(C_1)=\phi(C_2)=1$, we say the cycle $C$ is neutral in $\Phi$ without a mention of the direction. If every cycle of $G$ is neutral in $\Phi$, we say $\Phi$ is balanced. Complex unit gain graphs have caused attention in recent years. For more information, see \cite{HHD,MMA,YQT}.

The adjacency matrix $A(\Phi)$ of $\Phi=(G,\mathbb{T},\phi)$ is the $n\times n$ complex matrix $(a_{ij})$ with $a_{ij}=\phi(e_{v_iv_j})$ if $e_{v_iv_j}\in \vec{E}$ and $0$ otherwise, where $n$ is the order of $G$. Clearly, $A(\Phi)$ is Hermitian and all its eigenvalues are real. The energy of $\Phi$, denoted by $\mathcal{E}(\Phi)$, is defined to be the sum of the absolute values of its eigenvalues.

There are amounts of literature investigating the bounds on the energy of a graph in terms of other parameters.  In \cite{AGZ}, Akbari et al. proved the rank of a graph is a sharp lower bound of its energy. Wang and Ma \cite{WM} gave sharp bounds of graph energy in terms of vertex cover number and characterized all the extremal graphs attaining these bounds. Wong et al. \cite{WWC} proved $\mathcal{E}(G)\ge 2\mu(G)$, where $\mu(G)$ is the matching number of $G$, and partially characterized the extremal graphs. These results have already been extended to oriented graphs and mixed graphs in \cite{TW,WL}.

In this paper, we establish a lower bound for the energy of a complex unit gain graph
in terms of the matching number of its underlying graph, and characterize
all the complex unit gain graphs whose energy reaches this lower bound. Our result generalizes the corresponding results on graphs \cite{WWC}, oriented graphs \cite{TW} and mixed graphs in \cite{WL}.

\begin{thm}\label{thm 1.1}
  Let $\Phi=(G,\mathbb{T},\phi)$ be a complex unit gain graph and $\mu(G)$ the matching number of $G$. Then 
  \begin{equation}\label{eq 1}
    \mathcal{E}(\Phi)\ge 2\mu(G).
  \end{equation}
  Equality holds if and only if $\Phi$ is balanced and $G$ is the disjoint union of some regular complete bipartite graphs, together with some isolated vertices.
\end{thm}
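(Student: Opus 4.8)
I would identify the energy with the Schatten-$1$ (nuclear/trace) norm, $\mathcal E(\Phi)=\|A(\Phi)\|_{1}=\sum_i\sigma_i(A(\Phi))$, and run two matrix inequalities. First, for any Hermitian block matrix $M=\begin{pmatrix}P&Q\\Q^{*}&R\end{pmatrix}$ one has $\|M\|_{1}\ge 2\|Q\|_{1}$: writing $J=I\oplus(-I)$ for the block sign matrix, $\begin{pmatrix}0&Q\\Q^{*}&0\end{pmatrix}=\tfrac12(M-JMJ)$, so $2\|Q\|_{1}=\big\|\begin{pmatrix}0&Q\\Q^{*}&0\end{pmatrix}\big\|_{1}\le\tfrac12(\|M\|_{1}+\|JMJ\|_{1})=\|M\|_{1}$ (using that $\begin{pmatrix}0&Q\\Q^{*}&0\end{pmatrix}$ has eigenvalues $\pm\sigma_i(Q)$). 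Second, a square matrix whose diagonal entries all have modulus $1$ has nuclear norm at least its order, since $\|Q_0\|_1\ge\mathrm{Re}\,\mathrm{tr}(D Q_0)=(\text{order})$ for the diagonal unitary $D$ of conjugate phases; and the nuclear norm of a submatrix never exceeds that of the matrix. Now fix a maximum matching $M=\{u_1v_1,\dots,u_\mu v_\mu\}$ of $G$, put $V(G)=X\sqcup Y$ with $u_i\in X$, $v_i\in Y$ (unmatched vertices placed arbitrarily), and write $A(\Phi)=\begin{pmatrix}P&Q\\Q^{*}&R\end{pmatrix}$ accordingly; the $\mu\times\mu$ submatrix $Q_0$ of $Q$ on $(\{u_i\},\{v_i\})$ has diagonal $\phi(e_{u_iv_i})\in\mathbb T$, so chaining the three facts gives $\mathcal E(\Phi)\ge 2\|Q\|_1\ge 2\|Q_0\|_1\ge 2\mu(G)$. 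For the sufficiency of the equality condition: a balanced $\Phi$ is switching equivalent to $(G,\mathbb T,1)$, hence $\mathcal E(\Phi)=\mathcal E(G)$, and for $G$ a disjoint union of $K_{a_j,a_j}$'s and isolated vertices, additivity over components with $\mathcal E(K_{a,a})=2a=2\mu(K_{a,a})$ gives $\mathcal E(G)=2\mu(G)$.

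For necessity, assume equality and discard isolated vertices. Equality in ``submatrix $\le$ whole'' forces (after permuting) $Q=Q_0\oplus 0$, so every $M$-unmatched vertex is adjacent only to $\{u_i\}$'s or only to $\{v_i\}$'s (the unmatched set being independent by maximality of $M$). Equality in the first inequality, $\|A(\Phi)\|_1=2\|Q\|_1$, means that $\widetilde S=\begin{pmatrix}0&T\\T^{*}&0\end{pmatrix}$, with $T$ the partial-isometry part of the polar decomposition of $Q$, is an optimal dual certificate: $\mathrm{tr}(\widetilde SA(\Phi))=2\|Q\|_1=\|A(\Phi)\|_1$ with $\|\widetilde S\|\le1$, which forces $\widetilde S e_i=\mathrm{sgn}(\lambda_i)e_i$ on every eigenvector with $\lambda_i\ne 0$, i.e. $\widetilde S\,A(\Phi)=|A(\Phi)|$. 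Hence $\mathrm{range}(A(\Phi))=\mathrm{range}(|A(\Phi)|)\subseteq\mathrm{range}(\widetilde S)\subseteq\mathbb C^{V(M)}$, so every unmatched vertex lies in $\ker A(\Phi)$ and is therefore isolated --- impossible unless $M$ is perfect. Thus $n=2\mu$, $X=\{u_i\}$, $Y=\{v_i\}$, and equality in the second inequality now forces $DQ$ to be a correlation matrix (positive semidefinite, unit diagonal); all its entries lie in $\mathbb T\cup\{0\}$, and a correlation matrix with entries in $\mathbb T\cup\{0\}$ is, after conjugation by a diagonal unitary, a direct sum $\bigoplus_j J_{a_j}$ of all-ones blocks, because a nonzero entry makes a $2\times2$ principal minor vanish, which forces proportionality of the two rows, so ``nonzero entry'' is an equivalence relation with rigid phases inside each class. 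Switching, we may assume $Q=\bigoplus_j J_{a_j}$.

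With $Q=\bigoplus_j J_{a_j}$ the map $T$ is the block-averaging projection $\Pi=\bigoplus_j a_j^{-1}J_{a_j}$, and unwinding $\widetilde S A(\Phi)=|A(\Phi)|$ --- using that it is Hermitian and that its square is $A(\Phi)^2$ --- yields $\Pi P=P$, $P=R$, and (with zero diagonal) $P$ is constant on every $a_j\times a_{j'}$ block. Compressing by the isometry $\iota$ with $\iota e_j=a_j^{-1/2}\mathbf 1_{a_j}$ gives $A(\Phi)\cong A_0:=\begin{pmatrix}\widetilde C&D\\ D&\widetilde C\end{pmatrix}$ on $\mathbb C^{2t}$, where $D=\mathrm{diag}(a_1,\dots,a_t)$ and $\widetilde C$ is Hermitian with zero diagonal and $|\widetilde C_{jj'}|\in\{0,\sqrt{a_ja_{j'}}\}$; its nonzero eigenvalues are those of $A(\Phi)$, so $2\mu=\mathcal E(\Phi)=\|A_0\|_1=\|\widetilde C+D\|_1+\|\widetilde C-D\|_1$. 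Since $\mathrm{tr}(\widetilde C\pm D)=\pm\mathrm{tr}D$ and $\sum_j a_j=\mu$, the bound $\|\widetilde C+D\|_1+\|D-\widetilde C\|_1\ge\mathrm{tr}(\widetilde C+D)+\mathrm{tr}(D-\widetilde C)=2\mu$ must be tight, forcing $\widetilde C+D\succeq 0$ and $D-\widetilde C\succeq 0$. In $\widetilde C+D$ each off-diagonal entry is $0$ or attains the Cauchy--Schwarz bound $\sqrt{(\widetilde C+D)_{jj}(\widetilde C+D)_{j'j'}}$, so once more ``$\widetilde C_{jj'}\ne 0$'' is an equivalence relation; on each class $K_l$ both $\widetilde C+D$ and $D-\widetilde C$ are rank one, and their sum $2D|_{K_l}$ has rank $|K_l|$, whence $|K_l|\le 2$. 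Translating the two cases $|K_l|=1$ and $|K_l|=2$ back to $G$ (the required bipartitions of the component being explicit --- for $|K_l|=2$, put the $X$-vertices of one block with the $Y$-vertices of the other) shows each class is a component isomorphic to $K_{b_l,b_l}$; a last diagonal-unitary switching sends all gains to $1$, so $\Phi$ is balanced. Restoring the isolated vertices finishes.

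The main obstacle is the necessity direction, and within it the point that the naive guess ``the diagonal blocks $P,R$ vanish, i.e. $G$ is bipartite with the matching as one colour class'' is \emph{false}: already $C_4$, with a perfect matching that is not a pair of colour-class-crossing edges, has $P,R\ne 0$. One must therefore not try to kill $P,R$, but rather extract from the equality cases the exact operator identity $\widetilde S A(\Phi)=|A(\Phi)|$, read off the block-constant structure of $P,R$, and only then recover $\bigsqcup_l K_{b_l,b_l}$ from the rank-one / $|K_l|\le 2$ dichotomy. The two supporting facts that need care are the equality case of the trace-norm triangle inequality for Hermitian matrices and the classification of correlation matrices with entries in $\mathbb T\cup\{0\}$.
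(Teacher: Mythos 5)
Your argument is correct, but it takes a genuinely different route from the paper. The paper derives the inequality from edge-cut monotonicity of energy (Lemma \ref{lem 3.1}, following Day--So and \cite{WWC}) and settles the equality case through a chain of combinatorial lemmas: existence of a perfect matching (Lemma \ref{lem 3.5}), exclusion of pendant vertices, $P_4$ and $\tilde{C_6}$ (Lemmas \ref{lem 3.3}, \ref{lem 3.4}, \ref{lem 3.2}), an induction showing each component is a regular complete bipartite graph (Lemma \ref{lem 3.6}), a second induction with explicit switchings and a $4$-cycle energy computation to obtain balance (Lemma \ref{lem 3.10}), and a bipartite-double (Kronecker product with $K_2$) argument to exclude non-bipartite components (Lemma \ref{lem 3.7}). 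You instead stay entirely inside trace-norm analysis: the bound $\mathcal{E}(\Phi)\ge 2\|Q\|_1\ge 2\|Q_0\|_1\ge 2\mu(G)$ via pinching and trace duality, and the equality case by forcing tightness at every step --- the certificate identity $\widetilde S A(\Phi)=|A(\Phi)|$, positive semidefiniteness of $DQ_0$, the classification of correlation matrices with entries in $\mathbb{T}\cup\{0\}$, compression to a $2t\times 2t$ matrix, and the rank-one/$|K_l|\le 2$ dichotomy. I checked the delicate assertions you state without proof: equality in the submatrix inequality does force the complementary blocks of $Q$ to vanish (operator monotonicity of the square root applied to $Q_0Q_0^{*}+BB^{*}$); $\mathrm{Re}\,\mathrm{tr}(B)=\|B\|_1$ does force $B\succeq 0$ (via the singular value decomposition); and $\Pi P=P$, $P=R$, $\Pi P\Pi=P$ do follow from Hermiticity of $\widetilde S A(\Phi)$ together with $(\widetilde S A(\Phi))^2=A(\Phi)^2$; so these steps are sound and only need to be written out in full. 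Your route buys a self-contained, induction-free argument that treats bipartite and non-bipartite underlying graphs uniformly (no Kronecker double, no forbidden-subgraph lemmas) and correctly handles the subtlety that $P,R$ need not vanish; the paper's route buys reuse of the existing machinery for graphs, oriented graphs and mixed graphs, and a more combinatorial description of the extremal structure. A small simplification available to you: since unmatched vertices may be placed on either side of the partition, running your submatrix-equality step with both placements already shows every unmatched vertex is isolated, so perfectness of $M$ does not need the range argument (the certificate identity is, of course, still needed later).
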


\section{Preliminaries}\label{s2}

In this section, we shall introduce some notations and lemmas on complex unit gain graphs.

Recall that the degree of a vertex $u$ of $G$ is the number of its neighbors, i.e., the number of the vertices which are adjacent to $u$. If the degree of $u$ is $1$, we call it a pendant vertex of $G$. A graph $H$ is a subgraph of $G$ if $V(H)\subseteq V(G)$ and $E(H)\subseteq E(G)$. Further, if two vertices in $V(H)$ are adjacent in $H$ if and only if they are adjacent in $G$, $H$ is called an induced subgraph of $G$. For any subgraph $G_1$ of $G$, we define the subgraph $\Phi_1=(G_1,\mathbb{T},\phi)$ of $\Phi=(G,\mathbb{T},\phi)$ by restricting $\phi$ to $\overrightarrow{E}(G_1)=\{e_{uv},e_{vu}~|~uv\in E(G_1)\}$.

Let $G[V_1]$ (resp. $G[E_1]$) be the subgraph of $G$ induced by $V_1 \subseteq V(G)$ (resp. $E_1 \subseteq E(G)$). We use $G-V_1$ to denote $G[ \overline{V_1} ]$, where $\overline{V_1}=V(G)\setminus  V_1$. For any induced subgraph $H$ of $G$, we simply denote $G-V(H)$ by $G-H$ and call it the complement of $H$ in $G$. We write $G = H \oplus (G-H)$ when no edges in $G$ join the induced subgraph $H$ and its complement $G-H$. For a nonempty set $S\subseteq E(G)$, let $G-S$ be the spanning subgraph obtained from $G$ by deleting the edges in $S$. If there exists an induced subgraph $K$ such that $G-S=K\oplus (G-K)$, $S$ is called an edge cut of $G$. 

 Similar to \cite[Theorems 3.4 and 3.6]{DS}, we have the following lemma.

\begin{lem}\label{lem 3.1}
   Let $\Phi=(G,\mathbb{T},\phi)$ be a complex unit gain graph and $S$ an edge cut of $G$. Then $\mathcal{E}(G-S,\mathbb{T},\phi) \leq \mathcal{E}(\Phi)$. Further, if $G[S]$ is a star, $\mathcal{E}(G-S,\mathbb{T},\phi)< \mathcal{E}(\Phi)$.
\end{lem}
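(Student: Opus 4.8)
The plan is to compare the energy of $\Phi$ with that of the "cut-off" gain graph $(G-S,\mathbb{T},\phi)$ by viewing the latter as a block-diagonal truncation of a matrix unitarily equivalent to $A(\Phi)$. First I would observe that, since $S$ is an edge cut, there is an induced subgraph $K$ with $G-S=K\oplus(G-K)$; relabelling the vertices so that those of $K$ come first, we may write
\begin{equation}\label{eq block}
A(\Phi)=\begin{pmatrix} A_1 & B \\ B^{*} & A_2\end{pmatrix},\qquad A((G-S,\mathbb{T},\phi))=\begin{pmatrix} A_1 & 0 \\ 0 & A_2\end{pmatrix},
\end{equation}
where $A_1=A(\Phi[V(K)])$, $A_2=A(\Phi[V(G)\setminus V(K)])$ and the off-diagonal block $B$ records precisely the gains of the edges in $S$. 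Both matrices are Hermitian. The energy is the trace norm (Schatten $1$-norm) $\|\cdot\|_{*}$ of the adjacency matrix, so the claim $\mathcal{E}(G-S,\mathbb{T},\phi)\le\mathcal{E}(\Phi)$ is the statement that deleting the off-diagonal block does not increase the trace norm. This is a standard pinching/compression fact: the map $X\mapsto \operatorname{diag}(X_{11},X_{22})$ is a pinching (an average of $X$ and $UXU^{*}$ with $U=\operatorname{diag}(I,-I)$), and pinchings are trace-norm contractions. I would either cite this or give the one-line proof: $\big\|\operatorname{diag}(X_{11},X_{22})\big\|_{*}=\big\|\tfrac12(X+UXU^{*})\big\|_{*}\le\tfrac12\|X\|_{*}+\tfrac12\|UXU^{*}\|_{*}=\|X\|_{*}$, using unitary invariance of the trace norm and the triangle inequality. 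This settles the first assertion.

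For the strict inequality when $G[S]$ is a star, I would argue that equality in the pinching bound forces the off-diagonal block to vanish, which contradicts $S\neq\emptyset$. Concretely, equality in the triangle step $\|\tfrac12(X+UXU^{*})\|_{*}\le\tfrac12\|X\|_{*}+\tfrac12\|UXU^{*}\|_{*}$ forces $X$ and $UXU^{*}$ to have the "same" polar/spectral structure in a way that, for Hermitian $X$, is known to imply $[X,U]=0$, i.e. $X$ is block-diagonal, i.e. $B=0$. Since an edge cut is by definition nonempty, $B\neq 0$, so the inequality is strict. I expect the cleaner route, and the one matching the phrasing "similar to \cite[Theorems 3.4 and 3.6]{DS}", to avoid abstract operator theory: when $G[S]$ is a star with centre $w$, all edges of $S$ are incident to the single vertex $w$. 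Deleting a single vertex changes the energy in a controlled way — by interlacing, $\big|\mathcal{E}(\Phi)-\mathcal{E}(\Phi-w)\big|$ and $\big|\mathcal{E}((G-S,\mathbb{T},\phi))-\mathcal{E}((G-S,\mathbb{T},\phi)-w)\big|$ are each bounded, and $\Phi-w=(G-S,\mathbb{T},\phi)-w$ because every edge of $S$ meets $w$. So it suffices to show $\mathcal{E}(\Phi)>\mathcal{E}(\Phi-w)+2\cdot 0$ fails to be tight; more precisely I would use the known fact that adding back a nonisolated vertex strictly increases the energy relative to the direct sum, via a rank-one type perturbation argument on the characteristic polynomial.

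The main obstacle is making the strict inequality genuinely rigorous rather than "morally clear." The pinching inequality is painless, but characterizing its equality case — or equivalently, proving that reattaching a star's worth of edges at a single vertex $w$ strictly raises the trace norm — needs an honest argument. The natural tool is a coefficient/interlacing comparison: write the characteristic polynomials of $A(\Phi)$ and of its direct-sum truncation and compare them using the Jacobi-type expansion along the row and column of $w$ (the gain-graph analogue of the standard formula $\phi_{G}(x)=x\,\phi_{G-w}(x)-\sum_{v\sim w}\phi_{G-w-v}(x)-2\sum_{C\ni w}\operatorname{Re}(\phi(C))\,\phi_{G-C}(x)$). The extra terms are real and, because $w$ has at least one neighbour in $S$, not all zero; one then has to convert this difference of characteristic polynomials into a strict difference of sums of absolute values of roots. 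I would model this conversion on the argument in \cite{DS} for signed/gain graphs, checking that nothing in it used real gains essentially — only Hermiticity of $A(\Phi)$, which holds here. If the interlacing bookkeeping proves delicate, the fallback is the operator-theoretic equality analysis of the pinching inequality, which is clean but less in the spirit of the cited reference.
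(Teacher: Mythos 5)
Your treatment of the first assertion is correct and is exactly the argument the paper intends: the paper gives no written proof of this lemma, but simply points to \cite[Theorems 3.4 and 3.6]{DS}, and the inequality $\mathcal{E}(G-S,\mathbb{T},\phi)\le\mathcal{E}(\Phi)$ is obtained there (and in your proposal) by the same symmetrization, $A(G-S,\mathbb{T},\phi)=\tfrac12\bigl(A(\Phi)+UA(\Phi)U^{*}\bigr)$ with $U=\operatorname{diag}(I,-I)$, plus the triangle inequality and unitary invariance of the trace norm; Hermiticity is all that is used, so the complex gains cause no difficulty.

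The strict inequality is where your proposal has a genuine gap. Your primary route, ``equality in the pinching bound forces $[X,U]=0$, i.e.\ $B=0$,'' is false: for $X=\bigl(\begin{smallmatrix}1&1\\1&1\end{smallmatrix}\bigr)$ and $U=\operatorname{diag}(1,-1)$ one has $\bigl\|\tfrac12(X+UXU^{*})\bigr\|_{*}=2=\tfrac12\|X\|_{*}+\tfrac12\|UXU^{*}\|_{*}$ with nonzero off-diagonal block, and at the graph level the $4$-cycle $C_4$ with $S$ the two vertex-disjoint edges of one perfect matching is a nonempty edge cut with $\mathcal{E}(C_4)=4=\mathcal{E}(C_4-S)=\mathcal{E}(2K_2)$. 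So equality in the first assertion genuinely occurs for non-star cuts; this is precisely why the lemma needs the hypothesis that $G[S]$ is a star, and why no argument ignoring that hypothesis can prove strictness. The correct equality condition is the known refinement of Ky Fan's trace-norm inequality (equality holds iff there is a single unitary $P$ making both summands positive semidefinite after multiplication by $P$), which is strictly weaker than block-diagonality, and the actual content of \cite[Theorem 3.6]{DS} is to rule this condition out using the star structure, i.e.\ that all cut edges share one vertex $w$. Your fallback via vertex deletion also does not close the argument: although $\Phi-w=(G-S,\mathbb{T},\phi)-w$, the vertex $w$ may retain neighbours inside its own side of the cut, so $\mathcal{E}((G-S,\mathbb{T},\phi)-w)$ can be strictly smaller than $\mathcal{E}(G-S,\mathbb{T},\phi)$ and your chain of inequalities points the wrong way; interlacing gives no uniform constant that separates the two deletions, and the ``known fact'' you invoke (deleting a non-isolated vertex strictly decreases the energy) is, in the case where $S$ consists of all edges at $w$, exactly the statement to be proved, so invoking it unproved is essentially circular. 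To repair the proof you should follow the cited Day--So argument: combine Fan's inequality with the equality characterization just mentioned (both valid for Hermitian complex matrices) and derive a contradiction from the assumption of equality together with the star hypothesis.
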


A matching $M$ of $G$ is an edge subset such that no two edges in $M$ share a common vertex. If $u$ is incident to some edge in $M$, $u$ is said to be saturated by $M$. Vertices which are not incident to any edge in $M$ are  unsaturated by $M$. A maximum matching of $G$ is a matching which contains the largest possible number of edges. The size of a maximum matching is known as the matching number of $G$, denoted by $\mu(G)$. $M$ is called a perfect matching of $G$ if every vertex of $G$ is saturated by $M$.

Similar to \cite[Theorem 1.1 {\rm (i)}]{WWC}, Lemma \ref{lem 3.1} implies the inequality (\ref{eq 1}) in Theorem \ref{thm 1.1}. Using this inequality, the following Lemmas \ref{lem 3.3}-\ref{lem 3.5} can be proved by similar methods in \cite{TW} and \cite{WL}.

\begin{lem}\label{lem 3.3}
   Let $\Phi=(G,\mathbb{T},\phi)$ be a complex unit gain graph with at least $3$ vertices. If $G$ is connected and has a pendant vertex, then $\mathcal{E}(\Phi)> 2\mu(G)$.
\end{lem}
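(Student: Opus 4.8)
\textbf{Proof proposal for Lemma \ref{lem 3.3}.} The plan is to induct on the order of $G$, using Lemma \ref{lem 3.1} to strip off a well-chosen edge cut around the pendant vertex. Let $u$ be a pendant vertex of $G$ with unique neighbor $v$. First I would dispose of the base case and the small-degree case: if $v$ has degree at most $2$ in $G$, then the graph is a short path near $u$ (or $G$ is a path $P_3$), and one can compute $\mathcal{E}(\Phi)$ directly, or better, observe that the edge $uv$ together with the (at most one) other edge at $v$ forms a star $G[S]$ whose removal is an edge cut; then Lemma \ref{lem 3.1} gives $\mathcal{E}(\Phi)>\mathcal{E}(G-S,\mathbb{T},\phi)\ge 2\mu(G-S)$, and I would check that deleting these one or two edges drops the matching number by exactly one, so $\mu(G-S)=\mu(G)-1$, whence $\mathcal{E}(\Phi)>2\mu(G)-2$; combined with the integrality coming from $\mathcal{E}(\Phi)\ge 2\mu(G)$ (Theorem \ref{thm 1.1}'s inequality, which is available) this forces the strict inequality $\mathcal{E}(\Phi)>2\mu(G)$. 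Wait—more carefully: we only get $\mathcal{E}(\Phi)>2(\mu(G)-1)$ this way, which is too weak on its own, so the real argument must combine the star-cut bound with a separate lower bound $\mathcal{E}(\Phi)\ge 2\mu(G)$ and show they cannot be simultaneously tight.

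The cleaner route, which I would actually carry out, is this. Let $S$ be the set of all edges incident with $v$, so $G[S]$ is a star and $G-S=K_1\oplus(G-u-v)\oplus\cdots$ — precisely, $G-S$ has $u$ and $v$ as isolated vertices together with $G-\{u,v\}$. Then $S$ is an edge cut (with $K=\{u\}$, say, or one checks the definition directly since removing all edges at $v$ disconnects $v$). Lemma \ref{lem 3.1} yields
\begin{equation}\label{eq aux1}
  \mathcal{E}(\Phi)>\mathcal{E}(G-S,\mathbb{T},\phi)=\mathcal{E}((G-\{u,v\},\mathbb{T},\phi))\ge 2\mu(G-\{u,v\}).
\end{equation}
On the other hand, any maximum matching of $G-\{u,v\}$ together with the edge $uv$ is a matching of $G$, so $\mu(G)\ge \mu(G-\{u,v\})+1$; and conversely $\mu(G-\{u,v\})\ge\mu(G)-1$ because deleting two vertices drops the matching number by at most... no, by at most $2$. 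The key structural fact I need is $\mu(G-\{u,v\})=\mu(G)-1$, which holds because $u$ is pendant: every matching of $G$ either misses $u$ (then it is a matching of $G-u$ with at most $\mu(G-\{u,v\})+1\le$ ... ) or uses $uv$, and in the pendant case one shows a maximum matching may be chosen to contain $uv$ (standard: if a maximum matching $M$ misses $u$, it must saturate $v$ via some edge $vw$; replacing $vw$ by $uv$ keeps it maximum). Hence $\mu(G)=\mu(G-\{u,v\})+1$, and plugging into (\ref{eq aux1}) gives $\mathcal{E}(\Phi)>2\mu(G-\{u,v\})=2(\mu(G)-1)$, which is still off by one.

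So the genuine argument must be subtler, and here is where I expect the main obstacle to lie: getting the \emph{exact} constant rather than losing $2$. The resolution, following the strategy in \cite{TW,WL}, is to \emph{not} delete all of $v$'s edges at once, but to argue by a more refined case analysis on the neighborhood of $v$ and induct. If $v$ has a second pendant neighbor $u'$, remove only the edge $uv$ (or handle the two-pendant configuration, where $\{u,v,u'\}$ is a component-like piece contributing $\mathcal{E}=2$ and $\mu=1$, directly). Otherwise, pick a neighbor $w\ne u$ of $v$ and consider $S=\{uv,vw\}$ — if this is an edge cut with $G[S]$ a path (hence containing the star $\{uv,vw\}$... it \emph{is} a star $K_{1,2}$ centered at $v$), Lemma \ref{lem 3.1} applies and $\mathcal{E}(\Phi)>\mathcal{E}(G-S,\mathbb{T},\phi)$; one then checks $\mu(G-S)\ge\mu(G)$ in this configuration (because the matching avoiding both $uv$ and $vw$ can still attain $\mu(G)$ — e.g. if some maximum matching uses neither edge, or can be rerouted to), giving $\mathcal{E}(\Phi)>2\mu(G)$ immediately. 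The heart of the matter is thus a combinatorial lemma: \emph{in a connected graph with a pendant vertex $u\!-\!v$ and $\deg(v)\ge 2$ where $v$ has no second pendant neighbor, there is an edge $e$ at $v$, $e\ne uv$, and a subset $S\ni e,uv$ with $G[S]$ a star such that $\mu(G-S)\ge\mu(G)$, or else $G$ is small enough to check by hand.} I would prove this by examining whether $v$ lies in a cycle or is a cut vertex and choosing $S$ accordingly, deferring the few degenerate graphs to a direct eigenvalue computation. The induction itself is then immediate from Lemma \ref{lem 3.1}; the delicate bookkeeping is entirely in the matching-number accounting and in ensuring the chosen edge set is genuinely an edge cut so that Lemma \ref{lem 3.1} is applicable.
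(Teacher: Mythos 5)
There is a genuine gap, and it sits exactly where you suspected: your fallback ``combinatorial lemma'' is false, and the exceptional graphs are not few. You insist on putting the pendant edge $uv$ into the deleted set $S$, so $u$ becomes isolated in $G-S$; then $\mu(G-S)\ge\mu(G)$ can hold only if some maximum matching of $G$ misses $u$, i.e.\ avoids $uv$. But whenever every maximum matching of $G$ saturates $u$ --- for instance whenever $G$ has a perfect matching, e.g.\ any long path $P_{2k}$, or any graph with a perfect matching to which the pendant configuration belongs --- no choice of a star $S$ containing $uv$ can satisfy $\mu(G-S)\ge\mu(G)$, and these graphs are certainly not ``small enough to check by hand.'' So the refined case analysis you defer to cannot be completed as stated, and your earlier attempts correctly only reach $\mathcal{E}(\Phi)>2\mu(G)-2$.

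The repair is to choose the star the other way around: keep $uv$ and delete everything else at $v$. Let $v$ be the unique neighbour of the pendant vertex $u$; since $G$ is connected with at least $3$ vertices, $\deg(v)\ge 2$, so $S=\{vw\in E(G)~|~w\ne u\}$ is a nonempty edge cut with $G[S]$ a star centred at $v$, and $G-S=G[\{u,v\}]\oplus(G-\{u,v\})$. Lemma \ref{lem 3.1} gives the strict inequality, a single gained edge always has eigenvalues $\pm1$ and hence energy exactly $2$, and the inequality (\ref{eq 1}) applies to $(G-\{u,v\},\mathbb{T},\phi)$; together with the fact you already proved (by the rerouting argument) that $\mu(G)=\mu(G-\{u,v\})+1$, this yields
\begin{equation*}
  \mathcal{E}(\Phi)>\mathcal{E}(G-S,\mathbb{T},\phi)=2+\mathcal{E}(G-\{u,v\},\mathbb{T},\phi)\ge 2+2\mu(G-\{u,v\})=2\mu(G),
\end{equation*}
with no induction and no case analysis needed. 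This is the argument in the sources the paper points to (\cite{TW,WL}, following \cite{WWC}); the paper itself only cites those methods rather than spelling out a proof. Your write-up has all the ingredients (the star-cut lemma, the matching identity, the energy of $K_2$) but assembles them around the wrong star, which is why the constant keeps coming out short by $2$.
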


\begin{lem}\label{lem 3.4}
   Let $\Phi=(\tilde{C_6},\mathbb{T},\phi)$ be a complex unit gain graph, where $\tilde{C_6}$ is obtained from a $6$-cycle $C_6 = v_1v_2v_3v_4v_5v_6v_1$ by adding an edge $v_2v_5$. Then $\mathcal{E}(\Phi)> 6$.
\end{lem}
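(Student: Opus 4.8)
The graph $\tilde C_6$ has $6$ vertices and a perfect matching (e.g. $v_1v_2,v_3v_4,v_5v_6$), so $\mu(\tilde C_6)=3$ and we must show $\mathcal E(\Phi)>6$. My plan is to reduce the continuum of possible gain functions to a finite check by \emph{switching}. Since switching a complex unit gain graph (multiplying $\phi(e_{uv})$ by $\theta(v)\overline{\theta(u)}$ for a function $\theta:V\to\mathbb T$) preserves the spectrum and hence the energy, and since $\tilde C_6$ has a spanning tree on $5$ of its edges, I can switch so that all tree edges carry gain $1$; the gains of the two independent cycles of $\tilde C_6$ are switching invariants, so $\Phi$ is determined up to switching and the choice of directions by a pair $(\alpha,\beta)\in\mathbb T^2$, where $\alpha,\beta$ are the gains of two fundamental cycles (say the two $4$-cycles $v_1v_2v_5v_6v_1$ and $v_2v_3v_4v_5v_2$, which then forces the outer $6$-cycle to have gain $\alpha\beta$). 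Thus $\mathcal E(\Phi)$ is a function of $(\alpha,\beta)$ only.

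Next I would write down the characteristic polynomial of $A(\Phi)$ in this normal form. By the standard expansion of $\det(xI-A(\Phi))$ over \emph{elementary subgraphs} (disjoint unions of edges and cycles), each cycle contributing $-2\,\mathrm{Re}\,\phi(C)$ in place of the usual $-2$, the coefficients of the characteristic polynomial depend on $\alpha,\beta$ only through $\mathrm{Re}\,\alpha$, $\mathrm{Re}\,\beta$, $\mathrm{Re}(\alpha\beta)$ (the three cycles of length $4,4,6$) and possibly $\mathrm{Re}$ of products arising from pairs of vertex-disjoint cycles — but in $\tilde C_6$ the only two vertex-disjoint cycles would have to together use all $6$ vertices with none left, and one checks there is no such pair, so only the three single-cycle terms appear. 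This gives $\phi_\Phi(x)=x^6 - 7x^4 + \big(\text{linear in }\mathrm{Re}\,\alpha,\mathrm{Re}\,\beta\big)x^3 + \cdots$, an explicit even-plus-lower-order polynomial with real coefficients parametrised by finitely many real numbers in $[-1,1]$.

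To bound the energy from below I would use the Coulson-type integral formula, or more elementarily the identity $\mathcal E(\Phi)=\frac1\pi\int_{-\infty}^{\infty}\log\big|x^{-6}\phi_\Phi(\mathrm i x)\big|^{?}$-style expression; but the cleaner route is: $\mathcal E(\Phi)\ge \sum|\lambda_i|\ge \big(\sum\lambda_i^2\big)^{1/2}\cdot(\text{something})$ is too weak, so instead I would use that for a Hermitian matrix $\mathcal E=2\max_{\,U}\mathrm{Re}\,\mathrm{tr}(U^\ast A)$ over the relevant partial isometries, or most practically just note $\mathcal E(\Phi)>6$ is equivalent, via $\mathcal E(\Phi)^2=\sum\lambda_i^2+2\sum_{i<j}|\lambda_i\lambda_j|$ together with $\sum\lambda_i^2=2|E|=14$ and $\sum_{i<j}\lambda_i\lambda_j = $ (coefficient of $x^4$) $=-7$, to a statement purely about the multiset of $\{\lambda_i^2\}$; tracking also $\prod\lambda_i^2=(\det A(\Phi))^2$ and $\sum\lambda_i^4$ (both explicit in $\alpha,\beta$) pins down the spectrum enough. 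I expect the \textbf{main obstacle} to be precisely this last step — converting the explicit but two-parameter family of characteristic polynomials into the strict inequality $\sum|\lambda_i|>6$ uniformly over $(\alpha,\beta)\in\mathbb T^2$. The natural way to finish is a compactness/continuity argument: the function $(\alpha,\beta)\mapsto\mathcal E(\Phi_{\alpha,\beta})$ is continuous on the compact torus $\mathbb T^2$, so it suffices to show (a) it is $>6$ at every point where the matrix is singular or has a repeated-structure degeneracy, handled by the earlier lemmas since such $\Phi$ reduces to a graph with a pendant vertex or a smaller piece via Lemma \ref{lem 3.1}, and (b) it never \emph{equals} $6$ in the interior, which by the same elementary-subgraph bookkeeping would force $G-S$ for some star edge cut $S$ to already have energy $6=2\mu$, contradicting the strict inequality in Lemma \ref{lem 3.1} once one exhibits a star edge cut in $\tilde C_6$ (delete the two edges at $v_2$ other than... — concretely, an edge cut whose induced subgraph is $K_{1,2}$ centered at $v_2$ separating a path from the rest). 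Assembling (a) and (b) yields $\mathcal E(\Phi)>6$.
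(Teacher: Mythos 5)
Your reduction by switching is sound: fixing a spanning tree with gain $1$, the energy depends only on the gains of two fundamental cycles, and since $\tilde C_6$ is bipartite with no two vertex-disjoint cycles, the characteristic polynomial depends only on $\mathrm{Re}\,\alpha$, $\mathrm{Re}\,\beta$ and $\mathrm{Re}(\alpha\beta)$. (Minor slip: there are no triangles, so the gains do not enter at the $x^3$ coefficient; they first appear in the $x^2$ coefficient, via the two $4$-cycles, and in the constant term.) The problem is that you stop exactly where the proof has to be done: you yourself flag the uniform strict inequality over $(\alpha,\beta)\in\mathbb T^2$ as the ``main obstacle,'' and the mechanism you propose to overcome it does not work. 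The graph $\tilde C_6$ is $2$-connected, so an edge cut $S$ whose induced subgraph is a star must consist of \emph{all} edges incident to a single vertex (otherwise the star's center would be a cut vertex); then $G-S$ has an isolated vertex and five remaining vertices, so $\mu(G-S)=2$ and Lemma \ref{lem 3.1} only yields $\mathcal E(\Phi)>4$, not $>6$. Your concrete suggestion --- a $K_{1,2}$ centered at $v_2$ ``separating a path from the rest'' --- is not even an edge cut in the paper's sense: deleting $v_1v_2$ and $v_2v_3$ while keeping $v_2v_5$ leaves a spanning tree, which is connected. Likewise, the assertion in your step (b) that $\mathcal E(\Phi_{\alpha,\beta})=6$ would ``force'' some $G-S$ to have energy $6$ is not argued at all; the continuity/connectedness framework is fine in principle, but it needs precisely the fact $\mathcal E\neq 6$ everywhere, which is the whole content of the lemma.

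What is actually needed (and what the paper implicitly invokes, citing the analogous computations in \cite{TW} and \cite{WL}, and what it carries out explicitly for the $4$-cycle inside the proof of Lemma \ref{lem 3.10}) is to finish the computation you set up: write the characteristic polynomial in the switched normal form, e.g. $x^6-7x^4+c_4(\alpha,\beta)x^2+c_6(\alpha,\beta)$ with $c_4,c_6$ explicit in $\mathrm{Re}\,\alpha,\mathrm{Re}\,\beta,\mathrm{Re}(\alpha\beta)$, and then bound $\mathcal E=2(\sqrt{y_1}+\sqrt{y_2}+\sqrt{y_3})$ from below, where $y_1,y_2,y_3$ are the roots of the cubic in $y=x^2$, using $y_1+y_2+y_3=7$ together with the explicit expressions for $y_1y_2+y_1y_3+y_2y_3$ and $y_1y_2y_3$; a short optimization over the three real parameters in $[-1,1]$ (or an elementary inequality such as comparing $\mathcal E^2=14+2\sum_{i<j}\sqrt{y_iy_j}$ with $36$) then gives the strict bound $\mathcal E(\Phi)>6$ for every gain function. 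As it stands, your proposal sets up the right parametrization but leaves the decisive inequality unproved, and the shortcut via Lemma \ref{lem 3.1} cannot supply it.
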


\begin{lem}\label{lem 3.2}
   Let $\Phi=(G,\mathbb{T},\phi)$ be a complex unit gain graph. Suppose $G_1$ is an induced subgraph of $G$ with $\mu(G)= \mu(G_1) + \mu(G-G_1)$. If $\mathcal{E}(\Phi)=2\mu(G)$, then $\mathcal{E}(G_1,\mathbb{T},\phi)=2\mu(G_1)$ and $G_1$ is not $P_4$ or $\tilde{C_6}$. 
\end{lem}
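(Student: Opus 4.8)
The plan is to derive the statement from three ingredients already available: Lemma~\ref{lem 3.1} on edge cuts, the inequality~(\ref{eq 1}) of Theorem~\ref{thm 1.1} applied \emph{separately} to the two parts of the split, and the two special-graph estimates of Lemmas~\ref{lem 3.3} and~\ref{lem 3.4}.

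First I would show that energy is super-additive along the split $\{V(G_1),\,V(G)\setminus V(G_1)\}$. Let $S$ be the set of edges of $G$ with one end in $V(G_1)$ and the other in $V(G)\setminus V(G_1)$. Since $G_1$ is an induced subgraph, $G-S=G_1\oplus(G-G_1)$, so $A(G-S,\mathbb{T},\phi)$ is block diagonal and hence $\mathcal{E}(G-S,\mathbb{T},\phi)=\mathcal{E}(G_1,\mathbb{T},\phi)+\mathcal{E}(G-G_1,\mathbb{T},\phi)$. If $S\neq\emptyset$ then $S$ is an edge cut, so Lemma~\ref{lem 3.1} gives $\mathcal{E}(G-S,\mathbb{T},\phi)\le\mathcal{E}(\Phi)$; if $S=\emptyset$ then $A(\Phi)$ is itself block diagonal and this inequality is trivial. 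In either case
\begin{equation*}
  \mathcal{E}(\Phi)\ge\mathcal{E}(G_1,\mathbb{T},\phi)+\mathcal{E}(G-G_1,\mathbb{T},\phi).
\end{equation*}

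Next I would insert the bound~(\ref{eq 1}), valid for the complex unit gain graphs $(G_1,\mathbb{T},\phi)$ and $(G-G_1,\mathbb{T},\phi)$, together with the hypotheses $\mu(G)=\mu(G_1)+\mu(G-G_1)$ and $\mathcal{E}(\Phi)=2\mu(G)$, to get
\begin{align*}
  2\mu(G)=\mathcal{E}(\Phi)&\ge\mathcal{E}(G_1,\mathbb{T},\phi)+\mathcal{E}(G-G_1,\mathbb{T},\phi)\\
  &\ge 2\mu(G_1)+2\mu(G-G_1)=2\mu(G).
\end{align*}
Thus every inequality above is an equality; in particular $\mathcal{E}(G_1,\mathbb{T},\phi)=2\mu(G_1)$, which is the first assertion (and one also gets $\mathcal{E}(G-G_1,\mathbb{T},\phi)=2\mu(G-G_1)$).

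For the second assertion I would argue by contradiction using the equality just obtained. If $G_1=P_4$, then $\mu(G_1)=2$, but $P_4$ is connected, has a pendant vertex, and has at least $3$ vertices, so Lemma~\ref{lem 3.3} forces $\mathcal{E}(G_1,\mathbb{T},\phi)>2\mu(G_1)$, a contradiction. If $G_1=\tilde{C_6}$, then $\mu(G_1)=3$ (the matching $\{v_1v_2,v_3v_4,v_5v_6\}$ is perfect), whereas Lemma~\ref{lem 3.4} gives $\mathcal{E}(G_1,\mathbb{T},\phi)>6=2\mu(G_1)$, again a contradiction. Hence $G_1$ is neither $P_4$ nor $\tilde{C_6}$. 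The only step needing a moment's thought is the first one: recognizing that the set of edges joining $G_1$ to its complement is precisely an edge cut of the kind handled by Lemma~\ref{lem 3.1} (or is empty), which is exactly what makes $\mathcal{E}$ super-additive across the split; the rest is the bookkeeping chain above combined with the two ready-made strict inequalities.
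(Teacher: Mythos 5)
Your proof is correct and follows exactly the route the paper intends: the paper gives no explicit proof of this lemma, stating only that it follows from inequality~(\ref{eq 1}) by the methods of \cite{TW} and \cite{WL}, and those methods are precisely your argument --- treat the cross edges as an edge cut (or note $S=\emptyset$ is trivial), use Lemma~\ref{lem 3.1} plus block-diagonality to get super-additivity of the energy, squeeze with~(\ref{eq 1}) and $\mu(G)=\mu(G_1)+\mu(G-G_1)$ to force $\mathcal{E}(G_1,\mathbb{T},\phi)=2\mu(G_1)$, and then exclude $P_4$ and $\tilde{C_6}$ via the strict inequalities of Lemmas~\ref{lem 3.3} and~\ref{lem 3.4}. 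No gaps; the verification that the cross-edge set is an edge cut in the paper's sense (nonempty, with $G-S=G_1\oplus(G-G_1)$) is handled correctly.
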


\begin{lem}\label{lem 3.5}
   Let $\Phi=(G,\mathbb{T},\phi)$ be a complex unit gain graph without isolated vertices. If $\mathcal{E}(\Phi) = 2\mu(G)$, then $G$ has a perfect matching.
\end{lem}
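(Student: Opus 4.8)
The plan is to argue by contradiction, exploiting the \emph{strict} part of Lemma~\ref{lem 3.1}. Suppose that $G$ has no perfect matching and fix a maximum matching $M$ of $G$. Since $M$ is not perfect, there is a vertex $v$ not saturated by $M$, and since $G$ has no isolated vertices, $\deg_G(v)\ge 1$. Restricting $M$ (which avoids $v$) to $G-v$ shows that $M$ remains a matching of $G-v$ of size $\mu(G)\ge\mu(G-v)$, so in fact $\mu(G-v)=\mu(G)$.

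First I would carry out the structural reduction. Let $S$ be the set of all edges of $G$ incident with $v$; this is a nonempty subset of $E(G)$. Since $S$ is precisely the set of edges joining $\{v\}$ to $V(G)\setminus\{v\}$, taking $K=G[\{v\}]$ gives $G-S=K\oplus(G-v)$, so $S$ is an edge cut of $G$; moreover $G[S]$ is the star $K_{1,\deg_G(v)}$ centred at $v$. Hence the strict form of Lemma~\ref{lem 3.1} applies and yields $\mathcal{E}(G-S,\mathbb{T},\phi)<\mathcal{E}(\Phi)$. In $G-S$ the vertex $v$ is isolated, so $A(G-S,\mathbb{T},\phi)$ is, up to a permutation, the direct sum of a $1\times1$ zero block and $A(G-v,\mathbb{T},\phi)$, whence $\mathcal{E}(G-S,\mathbb{T},\phi)=\mathcal{E}(G-v,\mathbb{T},\phi)$.

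Then I would close the loop using the basic inequality~(\ref{eq 1}). Applying it to the complex unit gain graph $(G-v,\mathbb{T},\phi)$ gives $\mathcal{E}(G-v,\mathbb{T},\phi)\ge 2\mu(G-v)=2\mu(G)$. Combining the two displayed inequalities,
\[
  2\mu(G)\ \le\ \mathcal{E}(G-S,\mathbb{T},\phi)\ <\ \mathcal{E}(\Phi)\ =\ 2\mu(G),
\]
a contradiction. Therefore $G$ has a perfect matching.

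I do not expect a genuine obstacle here; the argument is short, and the only points that need care are the bookkeeping ones: verifying that $S$ literally satisfies the definition of an edge cut (that deleting $S$ splits $G$ as $G[\{v\}]\oplus(G-v)$ with no crossing edges) and that $G[S]$ is genuinely a star, so that the strict conclusion of Lemma~\ref{lem 3.1} is available; and observing that $G-v$ may acquire isolated vertices when $v$ has a neighbour of degree~$1$, which is harmless because inequality~(\ref{eq 1}) holds for every complex unit gain graph and isolated vertices affect neither $\mathcal{E}$ nor $\mu$. Note that this approach never invokes connectedness, so Lemmas~\ref{lem 3.3}, \ref{lem 3.4} and~\ref{lem 3.2} are not needed for this particular statement.
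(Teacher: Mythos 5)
Your argument is correct, and it is exactly the intended proof: the paper itself omits the details (deferring Lemmas \ref{lem 3.3}--\ref{lem 3.5} to the methods of \cite{TW,WL}), and that method is precisely what you carry out --- delete the star edge cut at a vertex missed by a maximum matching, apply the strict case of Lemma \ref{lem 3.1}, note $\mu(G-v)=\mu(G)$, and contradict inequality (\ref{eq 1}). Your bookkeeping points (that $S$ is an edge cut with $G[S]$ a star, and that the isolated vertex $v$ in $G-S$ changes neither the energy nor the matching number) are all handled correctly.
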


\section{Proof of Theorem \ref{thm 1.1}}\label{s3}
 
To prove Theorem \ref{thm 1.1}, we need the following lemmas.

\begin{lem}\label{lem 3.6}
   Let $\Phi=(G,\mathbb{T},\phi)$ be a complex unit gain graph, where $G$ is a connected bipartite graph with at least two vertices. If $\mathcal{E}(\Phi) = 2\mu(G)$, $G$ is a regular complete bipartite graph.
\end{lem}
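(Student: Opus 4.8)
The plan is to deduce from the equality $\mathcal{E}(\Phi)=2\mu(G)$ that $G$ is complete bipartite with parts of equal size. First I would invoke Lemma~\ref{lem 3.5}: since $G$ is connected on at least two vertices it has no isolated vertex, so it admits a perfect matching $M$. If $(X,Y)$ is the bipartition of $G$, then $M$ pairs each vertex of $X$ with one of $Y$, so $|X|=|Y|=m:=|V(G)|/2$ and $\mu(G)=m$; write $M=\{x_1y_1,\dots,x_my_m\}$ with $x_i\in X$ and $y_i\in Y$. It then suffices to prove $G=K_{m,m}$, which is regular complete bipartite. So I would suppose not; in particular $m\ge 2$.

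The idea is to extract from the edges running between matched pairs a small induced subgraph that Lemma~\ref{lem 3.2} forbids, after which we are done. I would distinguish two cases. \emph{Case 1: some two indices $i\ne j$ satisfy that exactly one of $x_iy_j,x_jy_i$ lies in $E(G)$} — say $x_iy_j\in E(G)$ and $x_jy_i\notin E(G)$. Then $\{x_i,y_i,x_j,y_j\}$ induces the path $y_ix_iy_jx_j$ in $G$: the two remaining pairs lie within $X$ or within $Y$, and the only possible chord $x_jy_i$ is absent. Moreover $M\setminus\{x_iy_i,x_jy_j\}$ is a perfect matching of $G-\{x_i,y_i,x_j,y_j\}$, so $\mu(G)=m=2+(m-2)=\mu(P_4)+\mu(G-P_4)$; hence Lemma~\ref{lem 3.2} applied with this induced $P_4$ gives a contradiction.

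\emph{Case 2: for all $i\ne j$ one has $x_iy_j\in E(G)\iff x_jy_i\in E(G)$.} Here I would define a simple graph $H$ on $\{1,\dots,m\}$ by putting $ij\in E(H)$ exactly when $x_iy_j\in E(G)$, which is well defined by the case hypothesis. Any edge of $G$ joining $\{x_a,y_a:a\in I\}$ to $\{x_b,y_b:b\in J\}$ for disjoint $I,J$ must have the form $x_ay_b$ with $a\ne b$ and thus force $ab\in E(H)$, so a disconnection of $H$ would disconnect $G$; therefore $H$ is connected. Also $H\ne K_m$, since $H=K_m$ would give $G=K_{m,m}$. Thus $m\ge 3$, and taking a non-adjacent pair of $H$ at minimal (hence exactly $2$) distance we obtain distinct $i,j,k$ with $ik,jk\in E(H)$ and $ij\notin E(H)$. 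Then the subgraph of $G$ induced on $\{x_i,x_j,x_k,y_i,y_j,y_k\}$ is $K_{3,3}$ with the two disjoint edges $x_iy_j$ and $x_jy_i$ deleted, and I would check that this graph is isomorphic to $\tilde{C_6}$; since $M\setminus\{x_iy_i,x_jy_j,x_ky_k\}$ is a perfect matching of its complement, $\mu(G)=m=3+(m-3)=\mu(\tilde{C_6})+\mu(G-\tilde{C_6})$, and Lemma~\ref{lem 3.2} applied to this induced $\tilde{C_6}$ again gives a contradiction. Hence $G=K_{m,m}$.

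I expect the main obstacle to be the bookkeeping in Case~2: verifying that the six-vertex induced subgraph really is $\tilde{C_6}$ (equivalently, that $K_{3,3}$ with a disjoint pair of edges removed is precisely that graph) and that the connectivity of $G$ transfers to $H$. In contrast, the additivity of the matching number in both cases is immediate from restricting the perfect matching $M$, so the only genuine content beyond Lemma~\ref{lem 3.2} is setting up this dichotomy on the cross-edges between matched pairs and recognising the resulting forbidden configurations.
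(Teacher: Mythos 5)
Your argument is correct, but it takes a genuinely different route from the paper. The paper proves completeness by induction on the order: it picks a non-adjacent pair $x_1,y_1$, splits $G$ into the four-vertex graph $G_1=G[\{x_1,x_2,y_1,y_2\}]$ and its complement $G_2$, handles $n=4$ via the pendant-vertex Lemma~\ref{lem 3.3}, applies the induction hypothesis to the components of $G_2$ (each a regular complete bipartite graph), and then uses connectivity to exhibit an induced $\tilde{C_6}$ forbidden by Lemma~\ref{lem 3.2}. You avoid induction altogether: fixing a perfect matching $\{x_iy_i\}$ (from Lemma~\ref{lem 3.5}), you split into the ``asymmetric'' case (exactly one of $x_iy_j,x_jy_i$ present), which immediately yields an induced $P_4$ with the required matching additivity, and the ``symmetric'' case, where the auxiliary index graph $H$ is connected but incomplete, and a distance-two pair in $H$ produces $K_{3,3}$ minus two disjoint edges, which is indeed isomorphic to $\tilde{C_6}$ (the explicit $6$-cycle $x_iy_kx_jy_jx_ky_ix_i$ with chord $x_ky_k$ confirms this). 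Both proofs ultimately rest on the same two obstructions from Lemma~\ref{lem 3.2}, but your organization is shorter and self-contained: it needs neither Lemma~\ref{lem 3.3} nor an induction hypothesis, whereas the paper's inductive scheme mirrors the structure it later reuses in Lemma~\ref{lem 3.10}. One small point to tidy: when $m=2$ in your Case~1 (resp.\ $m=3$ in Case~2) the forbidden subgraph is all of $G$ and $G-G_1$ is empty; either read Lemma~\ref{lem 3.2} as allowing $G_1=G$ with $\mu(G-G_1)=0$, or dispose of these degenerate cases directly via Lemma~\ref{lem 3.3} (for $P_4$) and Lemma~\ref{lem 3.4} (for $\tilde{C_6}$).
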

\begin{proof}
   Assume that $G$ has $n$ vertices and its two partite sets are $X,Y$. Since $\mathcal{E}(\Phi) = 2\mu(G)$, by Lemma \ref{lem 3.5} we know that $G$ has a perfect matching, say $M$. Thus, $n$ is even and $|X|=|Y|= \mu(G) = n/2$.

   Next we prove $G$ is complete by induction on $n$. If $n=2$, $G$ is the complete graph of order $2$. Thus the result holds clearly. We assume that the result holds for complex unit gain graphs of order at most $n-2$ $(n\ge 4)$. In what follows we suppose that $\Phi=(G,\mathbb{T},\phi)$ is an $n$-vertex complex unit gain graph with $\mathcal{E}(\Phi) = 2\mu(G)$ and $G$ is a connected bipartite graph. 

   Let $X =\{x_1, x_2, \ldots , x_{n/2}\}$ and $Y =\{y_1, y_2, \ldots , y_{n/2}\}$. Suppose on the contrary that $G$ is incomplete. Then there exist vertices $x_1 \in X$, $y_1 \in Y$ such that $x_1$ is not adjacent to $y_1$ in $G$. Suppose that $x_1$ is $M$-saturated by edge $x_1y_2$ and $y_1$ is $M$-saturated by edge $x_2y_1$. If $n=4$, $x_2$ must be adjacent to $y_2$ as $G$ is connected and thus $G \cong P_4$ with two pendant vertices $x_1, y_1$. By Lemma \ref{lem 3.3}, we have $\mathcal{E}(\Phi)> 2\mu(G)$, which is a contradiction. Thus, $x_1$ must be adjacent to $y_1$ and hence $G$ is complete. 

   If $n\ge 6$, let $G_1= G[\{x_1,x_2,y_1,y_2\}]$, $G_2 = G-\{x_1,x_2,y_1,y_2\}$ and $S$ the edge cut of $G$ such that $G-S=G_1 \oplus G_2$. Note that $\mu(G) = \mu(G_1) + \mu(G_2)$. Denote $(G_1,\mathbb{T},\phi)$ and $(G_2,\mathbb{T},\phi)$ by $\Phi_1$ and $\Phi_2$, respectively. By Lemma \ref{lem 3.2}, we have $\mathcal{E}(\Phi_1) = 2\mu(G_1)$, $\mathcal{E}(\Phi_2)= 2\mu(G_2)$ and $G_1$ is not $P_4$. Thus, $x_2$ is not adjacent to $y_2$ in $G$ and $G_1\cong 2K_2$. Let $G_1^1=G[\{x_1,y_2\}]$ and $G_1^2=G[\{x_2,y_1\}]$. Thus $G_1=G_1^1\oplus G_1^2$.

   Assume that $G_2$ has $\omega$ connected components, denoted by $G_2^1,G_2^2, \ldots , G_2^\omega$. As $G_2$ has a perfect matching, each of these connected components is non-trivial and has a perfect matching. By the inequality (\ref{eq 1}), $$2\mu(G_2)=\mathcal{E}(\Phi_2)=\sum_{j=1}^\omega \mathcal{E}(G_2^j,\mathbb{T},\phi)\ge\sum_{j=1}^\omega 2\mu(G_2^j) = 2\mu(G_2).$$Hence, $\mathcal{E}(G_2^j,\mathbb{T},\phi) = 2\mu(G_2^j)$ for $j=1,2,\ldots,\omega$. By induction hypothesis, we obtain that each connected component $G_2^j$ is a regular complete bipartite graph.

   If $x,y\in V(G)$ are adjacent in $G$, we write $x\sim y$. According to Lemma \ref{lem 3.2}, if $x_1\sim y_0$ (resp. $x_2\sim y_0$) for any $y_0\in V(G_2^j)\cap Y$, $y_2\sim x$ (resp. $y_1\sim x$) for every $x\in V(G_2^j)\cap X$ and $x_1\sim y$ (resp. $x_2\sim y$) for every $y\in V(G_2^j)\cap Y$. Similarly, if $y_1\sim x_0$ (resp. $y_2\sim x_0$) for any $x_0\in V(G_2^j)\cap X$, $x_2\sim y$ (resp. $x_1\sim y$) for every $y\in V(G_2^j)\cap Y$ and $y_1\sim x$ (resp. $y_2\sim x$) for every $x\in V(G_2^j)\cap X$ and .

   We claim that $\omega=1$. Suppose on the contrary that $\omega\ge 2$. For two subsets $V_1,V_2$ of $V(G)$, let $E(V_1,V_2)=\{uv\in E(G)~|~u\in V_1,v\in V_2\}$. Since $G$ is connected, there exist two different connected components $G_2^{j_1}$ and $G_2^{j_2}$ of $G_2$ such that both $E(V(G_1^i),V(G_2^{j_1}))$ and $E(V(G_1^i),V(G_2^{j_2}))$ are non-empty sets, where $i=1$ or $2$. Without loss of generality, suppose both $E(V(G_1^1),V(G_2^1))$ and $E(V(G_1^1),V(G_2^2))$ are non-empty. For any $x_3\in V(G_2^1)\cap X$, $y_3 \in V(G_2^1)\cap Y$, $x_4\in V(G_2^2)\cap X$ and $y_4 \in V(G_2^2)\cap Y$, consider the subgraph $H$ induced by $\{x_1,x_3,x_4,y_2,y_3,y_4\}$. Clearly, $\mu(G)=\mu(H)+\mu(G-H)$ and $H\cong \tilde{C_6}$, which is a contradiction by Lemma \ref{lem 3.2}. Thus $\omega=1$.

   Now we have $G-S=G_1^1\oplus G_1^2\oplus G_2$, where $G_1^1$, $G_1^2$ and $G_2$ are connected. As $G$ is connected, both $E(V(G_1^1),V(G_2))$ and $E(V(G_1^2),V(G_2))$ are non-empty. Then consider the subgraph $K$ induced by $\{x_1,x_2,x_3,y_1,y_2,y_3\}$, where $x_3\in V(G_2)\cap X$ and $y_3\in V(G_2)\cap Y$. We have $K\cong \tilde{C_6}$ and $\mu(G)=\mu(K)+\mu(G-K)$, which is contradict with Lemma \ref{lem 3.2}. Thus, the assumption that $x_1\not \sim y_2$ is incorrect and we obtain that $G$ is a complete bipartite graph. As $|X|=|Y|$, $G$ is also regualr.
\end{proof}

Any function $\zeta:V(G)\to \mathbb{T}$ is called a switching function of $G$. Switching $\Phi=(G,\mathbb{T},\phi)$ by $\zeta$ means replacing $\phi$ by $\phi^\zeta$, which is defined as $\phi^\zeta(e_{uv}) = \zeta(u)^{-1}\phi(e_{uv})\zeta(v)$, and the resulting graph is denoted by $\Phi^\zeta = (G,\mathbb{T},\phi^\zeta)$. In this case, we say $\Phi$ and $\Phi^\zeta$ are switching equivalent, written by $\Phi\sim \Phi^\zeta$. As $A(\Phi)$ and $A(\Phi^\zeta)$ are similarity matrices, $\Phi$ and $\Phi^\zeta$ have the same energy. \cite[Lemma 5.3]{Z} shows that a complex unit gain graph $\Phi=(G,\mathbb{T},\phi)$ is balanced if and only if $\Phi$ is switching equivalent to $(G,\mathbb{T},1)$.

\begin{lem}\label{lem 3.10}
   Let $\Phi=(G,\mathbb{T},\phi)$ be a complex unit gain graph, where $G$ is a connected bipartite graph with at least two vertices. If $\mathcal{E}(\Phi) = 2\mu(G)$, then $\Phi$ is balanced.
\end{lem}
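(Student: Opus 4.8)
The plan is as follows. By Lemma~\ref{lem 3.6}, the hypothesis $\mathcal{E}(\Phi)=2\mu(G)$ with $G$ connected bipartite forces $G$ to be a regular complete bipartite graph $K_{m,m}$ for some $m\ge 1$, with partite sets $X=\{x_1,\dots,x_m\}$ and $Y=\{y_1,\dots,y_m\}$. So it remains to show that any $\mathbb{T}$-gain assignment on $K_{m,m}$ whose energy equals $2\mu(K_{m,m})=2m$ must be balanced. By the remark after Lemma~\ref{lem 3.6} (namely \cite[Lemma 5.3]{Z}), balancedness is equivalent to $\Phi$ being switching equivalent to $(G,\mathbb{T},1)$, so I may freely switch $\Phi$ to put $\phi$ in a convenient normal form. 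The standard normalization for a connected graph is to switch along a spanning tree so that all tree edges receive gain $1$; for $K_{m,m}$ a natural spanning tree is the "double star" consisting of all edges incident to $x_1$ together with all edges incident to $y_1$. After this switching we may assume $\phi(e_{x_1 y_j})=1$ for all $j$ and $\phi(e_{x_i y_1})=1$ for all $i$, and the only possibly non-trivial gains are $\phi(e_{x_i y_j})=:\alpha_{ij}\in\mathbb{T}$ for $i,j\ge 2$. Balancedness of $\Phi$ is then equivalent to $\alpha_{ij}=1$ for all $i,j\ge 2$, since every $4$-cycle $x_1 y_1 x_i y_j x_1$ has gain $\alpha_{ij}$ (up to inverse) and these $4$-cycles generate the cycle space.

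Now suppose for contradiction that some $\alpha_{ij}\ne 1$; say $\alpha_{22}\ne 1$. The idea is to locate a small induced subgraph of $\Phi$ on which the energy strictly exceeds twice its matching number, and then invoke Lemma~\ref{lem 3.2} (which says that if $\mathcal{E}(\Phi)=2\mu(G)$ and $G_1$ is an induced subgraph with $\mu(G)=\mu(G_1)+\mu(G-G_1)$, then $\mathcal{E}(G_1,\mathbb{T},\phi)=2\mu(G_1)$) for a contradiction. The natural candidate is $G_1=G[\{x_1,x_2,y_1,y_2\}]\cong K_{2,2}=C_4$; since $K_{m,m}$ minus these four vertices is $K_{m-2,m-2}$, which has a perfect matching when $m\ge 2$, we indeed have $\mu(G)=\mu(G_1)+\mu(G-G_1)=2+(m-2)=m$. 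So it suffices to prove the base case: a $\mathbb{T}$-gain graph on $C_4$ which is \emph{not} balanced has energy strictly greater than $4$. This is a direct computation: up to switching, the gain of $\Phi$ on $C_4$ is determined by a single parameter $\alpha=e^{i\theta}\in\mathbb{T}$ (the cycle gain), the eigenvalues of $A(\Phi)$ for $C_4$ with cycle gain $\alpha$ are $\pm\sqrt{2+2\cos\theta}=\pm 2|\cos(\theta/2)|$ each with multiplicity... one should just diagonalize the $4\times 4$ circulant-type Hermitian matrix, obtaining eigenvalues $\{\,\pm 2\cos(\theta/2),\ \pm 2\sin(\theta/2)\,\}$ (choosing the branch so these are the four values), hence $\mathcal{E}(\Phi_{C_4})=2(|\cos(\theta/2)|+|\sin(\theta/2)|)\cdot 2 = 4(|\cos(\theta/2)|+|\sin(\theta/2)|)\ge 4$ with equality iff $\theta\equiv 0$, i.e.\ iff $\alpha=1$, i.e.\ iff $\Phi|_{C_4}$ is balanced. (I will compute the eigenvalues carefully; the point is only that $|\cos t|+|\sin t|>1$ unless $t$ is a multiple of $\pi/2$, and the structure of $C_4$ rules out the $\pm i$ case because a balanced gain on a bipartite cycle must have cycle gain... actually on $C_4$ any gain is switching-equivalent to one with cycle gain $\alpha$, and $\alpha=\pm i$ gives energy $4\cdot\frac{\sqrt2}{... }$ wait — I should double check, but $\alpha=-1$ already gives strict inequality, and in any case $\alpha\ne 1$ yields $\mathcal{E}>4$.)

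Assembling the pieces: if $\Phi$ on $K_{m,m}$ were not balanced, pick indices $i,j\ge 2$ with the $4$-cycle $x_1y_1x_iy_jx_1$ non-neutral, set $G_1=G[\{x_1,x_i,y_1,y_j\}]\cong C_4$ carrying a non-balanced gain, note $\mu(G)=\mu(G_1)+\mu(G-G_1)$ as above, and conclude from Lemma~\ref{lem 3.2} that $\mathcal{E}(G_1,\mathbb{T},\phi)=2\mu(G_1)=4$, contradicting the base-case computation that a non-balanced gain on $C_4$ has energy strictly above $4$. Hence every such $4$-cycle is neutral, so $\Phi$ is balanced. The main obstacle — and the only real content beyond bookkeeping — is the base case: verifying that the $C_4$-gain graph has energy exactly $4$ precisely when it is balanced. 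This is elementary but must be done correctly, including checking the $\alpha=-1$ and $\alpha=\pm i$ sub-cases so that the strict inequality $\mathcal{E}>4$ genuinely holds for \emph{every} $\alpha\ne 1$; once that is in hand, the reduction via Lemmas~\ref{lem 3.6} and~\ref{lem 3.2} is routine. (An alternative to the explicit $C_4$ computation would be to cite or reprove the known fact that among $\mathbb{T}$-gain graphs on a fixed bipartite graph, the energy is minimized exactly at the balanced gain, but doing the $C_4$ case by hand is cleaner and self-contained.)
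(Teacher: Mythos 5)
Your proposal is correct in substance but organized differently from the paper. Both arguments rest on the same two pillars: Lemma \ref{lem 3.6} to reduce to a regular complete bipartite graph $K_{m,m}$, and the computation that a $\mathbb{T}$-gain $C_4$ has energy exactly $4$ precisely when its cycle is neutral, fed into Lemma \ref{lem 3.2} via the decomposition $\mu(G)=\mu(C_4)+\mu(K_{m-2,m-2})$. The difference is in how the gains are globally controlled: the paper proceeds by induction on $n$, first switching the single edge $x_1y_1$ to gain $1$, invoking the induction hypothesis to make $G-\{x_1,y_1\}$ all-$1$, using the $C_4$ computation to show that all remaining cross edges carry one common gain $a$, and finishing with a third switching by $a^{-1}$; you instead switch once along the double-star spanning tree and then apply the $C_4$ argument to each fundamental $4$-cycle $x_1y_1x_iy_jx_1$, which eliminates the induction and the bookkeeping of the common gain $a$ (note that you only need the easy direction of your cycle-space remark: if all non-tree gains $\alpha_{ij}$ equal $1$ after the tree normalization, the switched graph is $(G,\mathbb{T},1)$, hence $\Phi$ is balanced). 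Your route is a legitimate, arguably cleaner, alternative to the paper's inductive scheme.

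One computational point must be repaired. The eigenvalues you tentatively wrote for $C_4$ with cycle gain $e^{i\theta}$ are wrong: they are $2\cos\bigl((\theta+2\pi k)/4\bigr)$ for $k=0,1,2,3$, i.e.\ $\pm 2\cos(\theta/4)$ and $\pm 2\sin(\theta/4)$, not $\pm 2\cos(\theta/2)$ and $\pm 2\sin(\theta/2)$. With your displayed formula the energy would equal $4$ also at $\theta=\pi$ (cycle gain $-1$), which would destroy the argument; the correct formula gives $\mathcal{E}=4\bigl(|\cos(\theta/4)|+|\sin(\theta/4)|\bigr)>4$ for every $\theta\not\equiv 0 \pmod{2\pi}$. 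Equivalently, and closer to what the paper itself does, the characteristic polynomial is $\lambda^4-4\lambda^2+2-2\cos\theta$, giving energy $2\sqrt{2+\sqrt{2+2\cos\theta}}+2\sqrt{2-\sqrt{2+2\cos\theta}}\ge 4$ with equality if and only if $\cos\theta=1$. You flagged this uncertainty yourself, and your stated conclusion (energy strictly above $4$ for every non-neutral gain on $C_4$) is the true one, so the fix is purely local and the overall argument stands.
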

\begin{proof}
   According to Lemma \ref{lem 3.6}, we know $G$ is a regular complete bipartite. We show that $\Phi$ is balanced by induction on $n$. If $n=2$, $G$ is a tree which has no cycles and thus $\Phi$ is balanced. We assume that the result holds for complex unit gain graphs of order $n-2$ $(n\ge 4)$. Let $G$ be an $n$-vertex regular complete bipartite graph with $X=\{x_1,x_2,\ldots,x_{n/2}\}$ and $Y=\{y_1,y_2,\ldots,y_{n/2}\}$ being its partite sets and $\Phi=(G,\mathbb{T},\phi)$ a complex unit gain graph with $\mathcal{E}(\Phi)=2\mu(G)$. 

   Let $S$ be the edge cut of $G$ such that $G-S=G_1\oplus G_2$, where $G_1=G[\{x_1,y_1\}]$ and $G_2=G-\{x_1,y_1\}$. Clearly, $G_1$ is balanced, and thus there exists a switching function of $G_1$, denoted by $\zeta_1':V(G_1)\to \mathbb{T}$, such that $(G_1,\mathbb{T},\phi^{\zeta_1'})=(G_1,\mathbb{T},1)$. Define a switching function $\zeta_1:V(G)\to \mathbb{T}$ of $G$, where $\zeta_1(x_1)=\zeta_1'(x_1)$, $\zeta_1(y_1)=\zeta_1'(y_1)$ and $\zeta_1(z)=1$ for any $z\in V(G)\setminus\{x_1,y_1\}$. Switch $\Phi$ by $\zeta_1$ and denote $\Phi^{\zeta_1}$ by $\Phi_1=(G,\mathbb{T},\phi_1)$, where $\phi_1=\phi^{\zeta_1}$. Then we have $(G_1,\mathbb{T},\phi_1)=(G_1,\mathbb{T},1)$ and $\mathcal{E}(\Phi_1)=\mathcal{E}(\Phi)=2\mu(G)$. Note the fact that $\mu(G)=\mu(G_1)+\mu(G_2)$. Then according to Lemma \ref{lem 3.2}, we have $\mathcal{E}(G_2,\mathbb{T},\phi_1)=2\mu(G_2)$. 

   Note that $G_2$ is an $(n-2)$-vertex regular complete bipartite graph. Then by induction hypothesis, $(G_2,\mathbb{T},\phi_1)$ is balanced and thus there exists a switching function $\zeta_2':V(G_2)\to \mathbb{T}$ of $G_2$ such that $(G_2,\mathbb{T},\phi_1^{\zeta_2'})=(G_2,\mathbb{T},1)$. Define a new switching function $\zeta_2:V(G)\to \mathbb{T}$ of $G$, where $\zeta_2(z)=\zeta_2'(z)$ for all $z\in V(G_2)$ and $\zeta_2(x_1)=\zeta_2(y_1)=1$. Switch $\Phi_1$ by $\zeta_2$ and denote $\Phi_1^{\zeta_2}$ by $\Phi_2=(G,\mathbb{T},\phi_2)$, where $\phi_2=\phi_1^{\zeta_2}$. Clearly, $(G_1,\mathbb{T},\phi_2)=(G_1,\mathbb{T},1)$, $(G_2,\mathbb{T},\phi_2)=(G_2,\mathbb{T},1)$ and $\mathcal{E}(\Phi_2)=\mathcal{E}(\Phi_1)=\mathcal{E}(\Phi)=2\mu(G)$.

   Consider the subgraph $K$ induced by $\{x_1,y_1,x_0,y_0\}$ where $x_0$ is any vertex in $V(G_2)\cap X$ and $y_0$ is any vertex in $V(G_2)\cap Y$. Then $K$ is a $4$-cycle $x_1y_1x_0y_0x_1$ and $\phi_2(e_{x_1y_1})=\phi_2(e_{x_0y_0})=1$. Note that $\mu(G)=\mu(K)+\mu(G-K)$. By Lemma \ref{lem 3.2}, we have $\mathcal{E}(K,\mathbb{T},\phi_2)=2\mu(K)=4$. Suppose $\phi_2(e_{y_1x_0})=a$ and $\phi_2(e_{y_0x_1})=b$ where $a,b\in \mathbb{T}$. Then the characteristic polynomial of $A(K,\mathbb{T},\phi_2)$ is $f(\lambda)=\lambda^4-4\lambda^2+2-2{\rm Re}(ab)$. Let $x={\rm Re}(ab)\in [-1,1]$. The energy of $(K,\mathbb{T},\phi_2)$ is $$2\sqrt{2+\sqrt{2+2x}}+2\sqrt{2-\sqrt{2+2x}}\ge 4,$$ and the equality holds if and only if $x=1$ which implies $a=\bar{b}$.   Thus we know that $\phi_2(e_{y_1x_0})=\phi_2(e_{x_1y_0})=a$. Because of the arbitrariness of $y_0\in V(G_2)\cap Y$, we have for any $y\in V(G_2)\cap Y$, $\phi_2(e_{x_1y})=a$. Similarly, due to the arbitrariness of $x_0\in V(G_2)\cap X$, we have for any $x\in V(G_2)\cap X$, $\phi_2(e_{y_1x})=a$.

   Here we have $(G_1,\mathbb{T},\phi_2)=(G_1,\mathbb{T},1)$, $(G_2,\mathbb{T},\phi_2)=(G_2,\mathbb{T},1)$ and $\phi_2(e_{x_1y})=\phi_2(e_{y_1x})=a$ for every $y\in V(G_2)\cap Y$ and $x\in V(G_2)\cap X$. Define the third switching function $\zeta_3:V(G)\to \mathbb{T}$ of $G$, where $\zeta_3(x_1)=\zeta_3(y_1)=1$ and $\zeta_3(z)=a^{-1}$ for all $z\in V(G_2)$. Switching $\Phi_2$ by $\zeta_3$ and denote $\Phi_2^{\zeta_3}$ by $\Phi_3=(G,\mathbb{T},\phi_3)$, where $\phi_3=\phi_2^{\zeta_3}$. One can verify that all edges in $\Phi_3$ have the gain $1$. Thus we switch $\Phi$ by $\zeta_1$, $\zeta_2$ and $\zeta_3$ successively and then get $(G,\mathbb{T},1)$. By Lemma 5.3 in \cite{Z}, we obtain the desired result that $\Phi$ is balanced.
\end{proof}

Suppose $G$ and $H$ are graphs with vertex set $V(G)=\{v_1,v_2,\ldots,v_n\}$ and $V(H) =\{u_1, u_2, \ldots , u_m\}$, respectively. Then we define the Kronecker product of $\Phi=(G,\mathbb{T},\phi)$ and $H$, which is also a complex unit gain graph, denoted by
$\Phi\otimes H$. Its underlying graph is $G\otimes H$ with vertex set $\{(v_s,u_t)~|~s=1,2,\ldots,n;t=1,2,\ldots,m\}$ and edge set $\big\{(v_s,u_t)(v_{s'},u_{t'})~\big|~v_sv_{s'}\in E(G),~u_t u_{t'}\in E(H)\big\}$. The gain of $e_{(v_s,u_t)(v_{s'},u_{t'})}$ in $\Phi\otimes H$ is defined to be the gain of $e_{v_sv_{s'}}$ in $\Phi$. In particular, $\Phi\otimes K_2$ is called the complex unit gain bipartite double of $\Phi$.

Let $U=(u_{st})$ and $V$ be two matrices of order $p_1 \times p_2$ and $q_1 \times q_2$, respectively. The Kronecker product of $U$ and $V$ is defined to be $U\otimes V=(u_{st}V)$, which is a $p_1q_1 \times p_2q_2$ matrix. Note that the adjacency matrix of $\Phi\otimes H$ is $A(\Phi\otimes H) = A(\Phi) \otimes A(H)$. If the eigenvalues of $\Phi$ are $\eta_1, \eta_2, \ldots , \eta_n$ and the eigenvalues of $H$ are $\lambda_1, \lambda_2, \ldots , \lambda_m$, the eigenvalues of $\Phi\otimes H$ are $\eta_s\lambda_t$ where $s=1,2,\ldots,n$ and $t=1,2,\ldots,m$ (see \cite{B} for details).

\begin{lem}\label{lem 3.7}
   Let $\Phi=(G,\mathbb{T},\phi)$ be a complex unit gain graph whose underlying graph $G$ is connected and non-bipartite. Then we have $\mathcal{E}(\Phi)> 2\mu(G)$.
\end{lem}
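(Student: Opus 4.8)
The plan is to reduce the non-bipartite case to the bipartite one via the complex unit gain bipartite double $\Phi \otimes K_2$, whose spectral theory has just been set up. Since the eigenvalues of $K_2$ are $1$ and $-1$, the eigenvalues of $\Phi \otimes K_2$ are exactly $\eta_1,\dots,\eta_n,-\eta_1,\dots,-\eta_n$, where $\eta_1,\dots,\eta_n$ are the eigenvalues of $\Phi$. Hence $\mathcal{E}(\Phi \otimes K_2) = 2\,\mathcal{E}(\Phi)$. The underlying graph $G \otimes K_2$ is bipartite (it is the standard bipartite double cover of $G$), and since $G$ is connected and non-bipartite, $G \otimes K_2$ is connected. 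A matching of $G$ of size $\mu(G)$ lifts to a matching of $G \otimes K_2$ of size $2\mu(G)$, so $\mu(G \otimes K_2) \ge 2\mu(G)$; in fact one should check $\mu(G \otimes K_2) = 2\mu(G)$, which follows because the two copies of $G$ sitting inside $G \otimes K_2$ (after suitably splitting, $V(G)\times\{1\}\cup V(G)\times\{2\}$) each contribute at most $\mu(G)$ — more carefully, $G\otimes K_2$ has a matching of size $2\mu(G)$ and its matching number cannot exceed $2\mu(G)$ since any matching projects down into $G$ with multiplicity at most $2$. I would state this matching identity cleanly as a small observation.

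Next I would argue by contradiction: suppose $\mathcal{E}(\Phi) = 2\mu(G)$. Then $\mathcal{E}(\Phi \otimes K_2) = 4\mu(G) = 2 \cdot 2\mu(G) = 2\mu(G \otimes K_2)$, so $\Psi := \Phi \otimes K_2$ is a complex unit gain graph on the connected bipartite graph $G \otimes K_2$ attaining equality in \eqref{eq 1}. By Lemma~\ref{lem 3.6}, $G \otimes K_2$ must be a regular complete bipartite graph, and by Lemma~\ref{lem 3.10}, $\Psi$ is balanced. The contradiction I want to extract is purely graph-theoretic: if $G$ is connected and non-bipartite, then $G \otimes K_2$ cannot be complete bipartite. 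Indeed, if $G \otimes K_2 \cong K_{m,m}$, then every vertex of $G \otimes K_2$ has degree $m$, so $G$ is $m$-regular; but then $G \otimes K_2$ is the bipartite double of an $m$-regular connected non-bipartite graph, and such a double cover is $K_{m,m}$ only in degenerate situations. The cleanest route: in $K_{m,m}$ every two vertices on the same side have the same neighbourhood; translating this back to $G$ forces $G$ itself to be complete bipartite (or forces an odd girth constraint that fails), contradicting non-bipartiteness. I would make this precise by showing that $(v,1)$ and $(w,1)$ having a common neighbour $(u,2)$ in $G\otimes K_2$ means $u$ is adjacent to both $v$ and $w$ in $G$; completeness of $K_{m,m}$ then forces every vertex in $V(G)\times\{2\}$ adjacent to every vertex in $V(G)\times\{1\}$, i.e. every vertex of $G$ adjacent to every vertex of $G$, impossible for a simple graph on $\ge 2$ vertices — and the $n=1,2$ cases are trivial since then $G$ is bipartite anyway.

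The main obstacle is the final step: ruling out $G \otimes K_2 \cong K_{m,m}$ when $G$ is connected and non-bipartite. One must be careful because $G \otimes K_2$ being complete bipartite is a strong structural statement and the argument sketched above must correctly handle the bipartition of $G\otimes K_2$ — the natural bipartition of the tensor-with-$K_2$ is into $V(G)\times\{1\}$ and $V(G)\times\{2\}$ only when one is careful, and for non-bipartite $G$ this is indeed the bipartition since $G\otimes K_2$ is connected and bipartite. Once that is pinned down, the neighbourhood argument shows $(v,1)\sim(u,2)$ iff $vu\in E(G)$, so $K_{m,m}$-completeness says $vu\in E(G)$ for all $v,u$, forcing loops and hence a contradiction with simplicity. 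An alternative, possibly slicker, endgame is to invoke Lemma~\ref{lem 3.3} or Lemma~\ref{lem 3.2} directly on $G\otimes K_2$ to derive a contradiction from some induced $P_4$ or $\tilde{C_6}$, but I expect the direct neighbourhood argument to be shortest. A secondary point to verify carefully is that switching equivalence and the bipartite-double construction interact correctly, i.e. that $\mathcal{E}(\Phi\otimes K_2)=2\mathcal{E}(\Phi)$ holds as claimed from the stated eigenvalue formula — this is immediate but worth one sentence.
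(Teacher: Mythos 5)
Your overall strategy matches the paper's in its first half: the paper also passes to the bipartite double $\Phi\otimes K_2$, uses $\mathcal{E}(\Phi\otimes K_2)=2\mathcal{E}(\Phi)$, and feeds the equality case into Lemma~\ref{lem 3.6}. However, one step of your argument is false as stated: the identity $\mu(G\otimes K_2)=2\mu(G)$ does not hold for general $G$, and your justification (a matching of $G\otimes K_2$ ``projects down into $G$ with multiplicity at most $2$'') breaks because the projection of a matching of the double need not be a matching of $G$: the disjoint edges $(v,u_1)(w,u_2)$ and $(w,u_1)(x,u_2)$ project to the adjacent edges $vw$ and $wx$. Concretely, for $G=C_5$ one has $G\otimes K_2\cong C_{10}$, so $\mu(G\otimes K_2)=5>4=2\mu(G)$. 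The gap is repairable inside your contradiction argument, where you may assume $\mathcal{E}(\Phi)=2\mu(G)$: applying inequality (\ref{eq 1}) to $\Phi\otimes K_2$ gives $4\mu(G)=2\mathcal{E}(\Phi)=\mathcal{E}(\Phi\otimes K_2)\ge 2\mu(G\otimes K_2)\ge 4\mu(G)$, the last step because a maximum matching of $G$ lifts to a vertex-disjoint family of twice as many edges; hence $\mu(G\otimes K_2)=2\mu(G)$ and $\mathcal{E}(\Phi\otimes K_2)=2\mu(G\otimes K_2)$, which is all you need. The paper obtains the same equality differently, via Lemma~\ref{lem 3.5}: equality in (\ref{eq 1}) forces a perfect matching of $G$, hence of $G\otimes K_2$.

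Beyond that, your endgame genuinely differs from the paper's, and you should note what it requires. You apply Lemma~\ref{lem 3.6} directly to $\Phi\otimes K_2$, which needs the standard fact that the bipartite double of a connected non-bipartite graph is connected; this deserves a short proof (a closed odd walk together with connectedness lets you join $(v,u_1)$ to $(v,u_2)$ in the double), after which the contradiction is immediate because $(v,u_1)\not\sim(v,u_2)$ shows $G\otimes K_2$ is not complete bipartite; Lemma~\ref{lem 3.10} plays no role. The paper avoids invoking that covering fact: it observes the double is incomplete, concludes from Lemma~\ref{lem 3.6} that it must therefore be disconnected, and then analyzes the components (each regular complete bipartite) to show that $G$ itself would be a complete bipartite graph, contradicting non-bipartiteness --- in effect re-proving the connectivity fact along the way. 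Your route is shorter once the connectivity lemma is supplied; just include its proof and replace the false matching identity as indicated above.
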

\begin{proof}
   Suppose on the contrary that $\mathcal{E}(\Phi)=2\mu(G)=n$. Then by Lemma \ref{lem 3.5}, $G$ has a perfect matching. As $\mu(G)=n/2$, we know $G$ has $n$ vertices, denoted by $\{v_1,v_2,\ldots,v_{n}\}$. Suppose that $G$ has $m$ edges. Let $H$ be the complete graph with vertex set $\{u_1,u_2\}$. Then consider the Kronecker product $\Phi\otimes H$. Its underlying graph $G\otimes H$ has $2n$ vertices and $2m$ edges. Clearly, $G\otimes H$ is a bipartite graph with $X=\{(v_i,u_1)~|~i=1,2,\ldots,n\}$ and $Y=\{(v_i,u_2)~|~i=1,2,\ldots,n\}$ being its two partite sets and $\mathcal{E}(\Phi\otimes H)=2\mathcal{E}(\Phi)=2n$. Since $G$ has a perfect matching, so does $G\otimes H$ and $\mu(G\otimes H)=2\mu(G)=n$. Then $\mathcal{E}(\Phi\otimes H)=2\mu(G\otimes H)$. 

   Clearly, $G\otimes H$ is incomplete, as $(v_i,u_1)\in X$ is not adjacent to $(v_i,u_2)\in Y$ for any $i=1,2,\ldots,n$. According to Lemma \ref{lem 3.6}, we know $G\otimes H$ is not connected. 

  We claim $G\otimes H$ has only two isomorphic connected components. Suppose that $G\otimes H$ has $l$ connected components, denoted by $\Omega_1,\Omega_2,\ldots,\Omega_l$, each of which is a bipartite graph. Then by inequality (\ref{eq 1}), $2\mu(G\otimes H)=\mathcal{E}(\Phi\otimes H)=\sum_j\mathcal{E}(\Omega_j,\mathbb{T},\phi)\ge \sum_j 2\mu(\Omega_j)=2\mu(G\otimes H)$. Then $\mathcal{E}(\Omega_j,\mathbb{T},\phi)=2\mu(\Omega_j)$ for all $j=1,2,\ldots,l$. By Lemma \ref{lem 3.6}, we know that each $\Omega_j$ is a regular complete bipartite graph. 

   Suppose the two partite sets of $\Omega_1$ are $X_1=\{(x_1,u_1),(x_2,u_1),\ldots,(x_t,u_1)\}$ and $Y_1=\{(y_1,u_2),(y_2,u_2),\ldots,(y_t,u_2)\}$, where $x_i,y_i\in V(G)$ for $i=1,2,\ldots,t$. Let $X_1'=\{x_1,x_2,\ldots,x_t\}$ and $Y_1'=\{y_1,y_2,\ldots,y_t\}$. Since $\Omega_1$ is a complete bipartite graph, $x_i\sim y_j$ in $G$ for any $i,j=1,2,\ldots,t$. Thus $X_1'\cap Y_1'=\emptyset$ and $Y_1'\subseteq N(x_i)$ for each $i=1,2,\ldots,t$, where $N(x_i)$ is the set of the neighbors of $x_i$ in $G$. Suppose there exists $z\in V(G)\setminus Y_1'$ such that $x_i\sim z$ in $G$. Then $(x_i,u_1)$ must be adjacent to $(z,u_2)$ in $G\otimes H$. Hence $(z,u_2)\in Y_1$, which implies $z\in Y_1'$, a contradiction to the choice of $z$. Thus $N(x_i)=Y_1'$ for all $i=1,2,\ldots,t$. Similarly, $N(y_j)=X_1'$ for all $j=1,2,\ldots,t$. Let $X_2=\{(x_1,u_2),(x_2,u_2),\ldots,(x_t,u_2)\}$ and $Y_2=\{(y_1,u_1),(y_2,u_1),\ldots,(y_t,u_1)\}$. Then $X_2\cup Y_2$ induce another connected component of $G\otimes H$, say $\Omega_2$. 

   Consider the subgraph $G'$ of $G$ induced by $X_1'\cup Y_1'$. Clearly, $G'$ is a complete bipartite graph with $X_1'$ and $Y_1'$ being its two partite sets, and both $\Omega_1$ and $\Omega_2$ are isomorphic to $G'$. We also assert that $G'$ is a connected components of $G$ . If $G\otimes H$ has a third connected component, then $V(G)\setminus V(G')$ is not empty and thus $G$ is not connected, which is a contradiction. Here we prove the desired result that $G\otimes H$ has only two isomorphic connected components.

   From the above discuss, we also know that $G$ is a complete bipartite graph. This is contradictory with the fact that $G$ is non-bipartite. Thus the assumption $\mathcal{E}(\Phi)=2\mu(G)$ is incorrect and $\mathcal{E}(\Phi)>2\mu(G)$. 
\end{proof}

\noindent{\bf Proof of Theorem \ref{thm 1.1}}:
The inequality (\ref{eq 1}) can be proved by a similar method used in \cite[Theorem 1.1 {\rm (i)}]{WWC}. In the following, we prove the necessary and sufficient conditions for the energy of a complex unit gain graph to reach its lower bound.

 (Sufficiency) Assume $$G=(\cup_{j=1}^\omega K_{n_j,n_j})\cup (n-2\sum_{j=1}^\omega n_j)K_1,$$ where $K_{s,n-s}$ and $K_n$ are a complete bipartite graph and the complete graph of order $n$, respectively. Since $\Phi$ is balanced, we have  $$\mathcal{E}(\Phi)=\sum_{j=1}^\omega\mathcal{E}(K_{n_j,n_j},\mathbb{T},\phi)=\sum_{j=1}^\omega\mathcal{E}(K_{n_j,n_j})=\sum_{j=1}^\omega 2n_j=\sum_{j=1}^\omega 2\mu(K_{n_j,n_j})=2\mu(G).$$ 

(Necessity) Assume that $\mathcal{E}(\Phi) = 2\mu(G)$. Suppose that $G$ has $\omega$ non-trivial connected components $G_1, G_2, \ldots , G_\omega$. By the inequality (\ref{eq 1}), we obtain $$2\mu(G)=\mathcal{E}(\Phi) = \sum_{j=1}^\omega\mathcal{E}(G_j,\mathbb{T},\phi)\ge 2\mu(G_j)=2\mu(G).$$
Thus $\mathcal{E}(G_j,\mathbb{T},\phi)=2\mu(G)$ for $j=1,2,\ldots,\omega$. By Lemma \ref{lem 3.7}, each non-trivial connected component $G_j$ is bipartite. By Lemma \ref{lem 3.6} and \ref{lem 3.10}, $(G_j,\mathbb{T},\phi)$ is balanced and $G_j$ is a regular complete bipartite graph, for $j=1,2,\ldots,\omega$. Therefore, $\Phi$ is balanced and $G$ is the disjoint union of some regular complete bipartite graphs, together with some isolated vertices. ~~~~~~~~~~~~~~~~~~~~~~~~~~~~~~~~~~~~~~~~~~~~~~~~~~~~~~~~~~~~~~~~~~~~~~~~~~~~~~~~~~$\qed$

\begin{re}
{\rm 
 The results in \cite[Theorem 1.3]{WF} and \cite[Theorem 5.2]{WLM} can be extended to complex unit gain graphs, which gives a upper bound of $\mathcal{E}(\Phi)$ in terms of the rank of $\Phi$ and characterizes all the extremal complex unit gain graphs. Applying Theorem \ref{thm 1.1}, the bounds of graph energy in terms of the vertex cover number given in \cite[Theorems 3.1 and 4.2 ]{WM} can also be extended to the energy of complex unit gain graphs. However, the equality case in \cite[Theorem 3.1]{WM} follows from Perron-Frobinus Theorem, which only holds for real matrices. }
\end{re}

\section*{Acknowledgement}

The author would like to thank Professor Kaishun Wang and Doctor Benjian Lv for their valuable comments and suggestions regarding this work.

\end{document}